\newtheorem{theorem}{Theorem}[section]
\newtheorem{corollary}[theorem]{Corollary}
\newtheorem{lemma}[theorem]{Lemma}
\newtheorem{proposition}[theorem]{Proposition}
\theoremstyle{definition}
\newtheorem{definition}[theorem]{Definition}
\newtheorem{remark}[theorem]{Remark}
\newtheorem{notation}[theorem]{Notation}
\newcommand{\fe}{\operatorname{fe}}
\newcommand{\rk}{\operatorname{rk}}
\newcommand{\sol}{\operatorname{sol}}
\newcommand{\Pic}{\operatorname{Pic}}
\newcommand{\Det}{\operatorname{det}}
\journal{Linear Algebra and Its Applications}
\begin{document}

\begin{frontmatter}

\title{Partitions of elements in a monoid and its applications to systems theory\tnoteref{label1}
}
\tnotetext[label1]{This work has been partially supported by the spanish National Institute of Cyber-Security (INCIBE) accordingly to the rule 19 of The Digital Confidence Plan and the Universidad de Le\'on under the contract X43.}

\author{Miguel V. Carriegos}
\ead{miguel.carriegos@unileon.es}
\address{Instituto de Ciencias Aplicadas a Ciberseguridad, Departamento de Matem\'{a}ticas, Universidad de Le\'on}

\author{Noem\'{i} DeCastro--Garc\'{i}a\tnoteref{label2}}
\ead{ncasg@unileon.es}
\address{Departamento de Matem\'{a}ticas, Universidad de Le\'on}

\tnotetext[label2]{First communicated as the talk \emph{Enumeration of classes of feedback isomorphisms of Locally Brunovsky Linear Systems.} at the 19th. International Linear Algebra Society Meeting, Seoul, Korea, 2014.}

\begin{abstract}
The feedback class of a locally Brunovsky linear system is fully determined by the decomposition of state space as direct sum of system invariants \cite{miguel2013}. In this paper we attack the problem of enumerating all feedback classes of locally Brunovsky systems over a $n$-dimensional state space and translate to the combinatorial problem of enumerating all the partitions of integer $n$ in some abelian semigroup. The problem of computing the number $\nu(n,k)$ of all the partitions of integer $n$ into $k$ different summands is pointed out.
\end{abstract}

\begin{keyword}
Linear systems \sep feedback enumeration problem \sep partitions of integer $n$ into $k$ different summands

\MSC[2010]  93B10 \sep 15A21 \sep 05A17 \sep 13C10 
\end{keyword}
\end{frontmatter}



\section{Introduction}

It is well known \cite{B}, \cite{Kalman}, that the number of feedback equivalence classes of reachable control systems over a $n$-dimensional $\mathbb{K}$-vector space equals the number $p_{\mathbb{N}}(n)$ of partitions of integer $n$. This number equals the number, $\sol_{\mathbb{N}}$, of solutions in $\mathbb{N}$ of linear diophantine equation
\begin{equation}\label{ne+}
n=z_1+2z_2+\cdots+nz_n
\end{equation}

Above result is generalized in \cite{miguel2013} to the general framework of regular (locally Brunovsky) linear systems over a commutative ring. In fact the number of feedback equivalence classes of regular systems with (finitely generated projective) state space $X$ equals the number $\sol_{\mathbf{P}(R)}$ of solutions of the linear equation 
\begin{equation}
X=Z_1\oplus Z_2^2\oplus\cdots\oplus Z_n^n
\end{equation}
in monoid $(\mathbf{P}(R),\oplus)$ of finitely generated projective $R$-modules. This number equals the number $p_{\mathbf{P}(R)}(X)$ of partitions(direct sum decompositions) of projective module $X$ into direct summands if monoid $\mathbf{P}(R)$ happens to be cancellative.

Our goal in this paper is, applying results of \cite{miguel2013},  to give a complete account and obtain formulae relating the number of classes of feedback isomorphisms of regular systems with state space $X$ over different commutative rings with unit element using partitions.

 First, we compute that number when $R$ is a finite product of rings $R\simeq R_{1} \times \ldots \times R_{t}$ in terms of each direct factor $R_{i}$. This case is a generalization of $R=\mathbb{Z}/l\mathbb{Z}$, the ring of modular integers. Finally, we compute the case of $R$ being a Dedekind domain. 
 
The paper is organized as follows: In Section 2 we have some details about monoid $\mathbf{P}(R)$ of isomorphism classes of finitely generated projective $R$-module. 
In Section 3 we obtain the number of classes of feedback isomorphisms of regular systems with state space $X$ when the ring is projectively trivial and when the ring splits as a finite product of rings. In Section 4, we obtain the formula over Dedekind domains. Finally, we give our conclusions.

\section{The equation}

The paper deals with the solutions of equation 
\begin{equation}
\label{Xeoplus}
X=Z_1\oplus Z_2^2\oplus\cdots\oplus Z_n^n
\end{equation}
in the monoid $(\mathbf{P}(R),\oplus)$ of isomorphism classes of finitely generated projective $R$-modules. 

Let us review some elementary properties of $\mathbf{P}(R)$ which will be applied in the sequel. The reader is referred to \cite{Kbook} for more details.

\begin{proposition}
\label{preli}
Let $R$ be a commutative ring and let $\mathbf{P}(R)$ be the set of isomorphism classes of finitely generated projective $R$-modules. Then the following properties hold:
\begin{itemize}
\item[(i)] $\mathbf{P}(R)$ is a monoid under operation $[P]\oplus [Q] =[P\oplus Q]$. Identity element is the zero $R$-module.
\end{itemize}
In the sequel we denote by $P$ the finitely generated projective $R$-module and its isomorphism class.
\begin{itemize}
\item[(ii)] $\mathbf{P}(R)$ is a commutative monoid (i.e. $P \oplus Q=Q \oplus P$)
\item[(iii)] $\mathbf{P}(R)$ is a zero-sum-free monoid (i.e. $P\oplus Q = 0 \Rightarrow P=Q=0$)
\item[(iv)] The mapping $\varphi:(\mathbb{N},+)\rightarrow (\mathbf{P}(R),\oplus)$ sending $0\mapsto 0$ and $n\mapsto R^n$ is an injective morphism of monoids.
\item[(v)] If every finitely generated projective $R$-module is free (i.e. $R$ is projectively trivial) then above morphism $\varphi$ is an isomorphism.
\end{itemize}

If $R$ is a domain with field of fractions $\mathbb{K}_R$ then every finitely generated $R$-module $P$ has constant rank $\rk(P)=\dim (P\otimes_{R} \mathbb{K}_R)$. 
\begin{itemize}
\item[(vi)] Mapping $\rk:(\mathbf{P}(R),\oplus)\rightarrow (\mathbb{N},+)$ is a monoid morphism
\item[(vii)] $\rk$ is left inverse of $\varphi$; that is, $\rk\circ\varphi=Id_{\mathbb{N}}$
\end{itemize}
\end{proposition}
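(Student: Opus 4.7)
The plan is to verify the seven items essentially by unwinding definitions and invoking two standard facts from commutative algebra: the invariant basis number property for commutative rings, and the compatibility of tensor product with direct sum.

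For (i) and (ii), I would check that the operation $[P]\oplus[Q]:=[P\oplus Q]$ is well-defined on isomorphism classes (isomorphisms $P\cong P'$, $Q\cong Q'$ induce an isomorphism $P\oplus Q\cong P'\oplus Q'$), then note that associativity and commutativity of $\oplus$ on modules, together with $P\oplus 0\cong P$, transport to the quotient. Item (iii) follows immediately: if $P\oplus Q\cong 0$ then $P$ and $Q$ embed in the zero module, hence both are zero.

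For (iv), the map $n\mapsto R^n$ is a homomorphism because $R^{m+n}\cong R^m\oplus R^n$ canonically. Injectivity is the only non-trivial point: I would reduce $R^m\cong R^n$ modulo a maximal ideal $\mathfrak{m}\subset R$ (which exists since $R$ is commutative with unit) to obtain an isomorphism $(R/\mathfrak{m})^m\cong (R/\mathfrak{m})^n$ of vector spaces over the field $R/\mathfrak{m}$, forcing $m=n$ by dimension. This is the one step where the commutativity hypothesis is actually used in an essential way, and I regard it as the main (modest) obstacle. Item (v) is then a direct consequence: the hypothesis that every finitely generated projective $R$-module is free says $\varphi$ is surjective, and by (iv) it is injective, so it is an isomorphism of monoids.

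For (vi) and (vii), assume $R$ is a domain with field of fractions $\mathbb{K}_R$. The functor $-\otimes_R\mathbb{K}_R$ is additive, so $(P\oplus Q)\otimes_R\mathbb{K}_R\cong (P\otimes_R\mathbb{K}_R)\oplus(Q\otimes_R\mathbb{K}_R)$, and taking $\mathbb{K}_R$-dimensions shows $\rk(P\oplus Q)=\rk(P)+\rk(Q)$, i.e.\ $\rk$ is a morphism of monoids. Finally $\rk\circ\varphi(n)=\dim_{\mathbb{K}_R}(R^n\otimes_R\mathbb{K}_R)=\dim_{\mathbb{K}_R}\mathbb{K}_R^{\,n}=n$, which establishes $\rk\circ\varphi=\operatorname{Id}_{\mathbb{N}}$ and in particular gives a second, cleaner proof of the injectivity in (iv) under the extra hypothesis that $R$ is a domain.
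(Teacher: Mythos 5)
Your proposal is correct. Note, however, that the paper itself gives no proof of this proposition: it is stated as a review of standard facts with a pointer to Weibel's $K$-book, so there is no argument in the paper to compare yours against. The arguments you supply are exactly the standard ones that the cited reference contains: well-definedness of $\oplus$ on isomorphism classes for (i)--(ii), the direct-summand observation for (iii), reduction modulo a maximal ideal (the invariant basis number property of commutative rings) for the injectivity in (iv), and flatness/additivity of $-\otimes_R\mathbb{K}_R$ for (vi)--(vii). The only caveat worth recording is that the injectivity argument in (iv) tacitly requires $R\neq 0$, since the zero ring has no maximal ideal and $\varphi$ is certainly not injective there; this is a standing convention rather than a gap, but you should say so explicitly if you write the argument out.
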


We also need some notation:
\begin{notation}
\label{notation}
Let $\Sigma$ be a regular linear system.

We denote 
\begin{enumerate}
\item By $fe_{R}(X)$ as the number of feedback classes of regular systems over $R$ with state space $X$. 
\item By $sol_{\mathbf{P(R)}}(X)$ as the number of solutions of equation (\ref{Xeoplus}) in the $\mathbf{P}(R)$ where $X$ is a finitely generated projective $R$-module.
\end{enumerate}
\end{notation}

Let us remark that above two numbers are equal \cite{miguel2013}. This gives the pass from systems theory to combinatorial issues.
\section{The equation in monoids $\mathbb{N}$ and $\mathbb{N}^t$. Projectively trivial rings and finite product of rings.}

Now we study the equation (\ref{Xeoplus}) in monoid of nonnegative integers and its finite products. From the systems theory point of view, this is to solving the case of regular systems over projectively trivial rings and over finite product of projectively trivial rings. 

Examples of projectively trivial rings are: Fields $\mathbb{K}$; local rings like $\mathbb{K}[[x_1,...,x_s]]$ or $\mathbb{Z}/p^r\mathbb{Z}$, $p$ prime; principal ideal domains like $\mathbb{Z}$ or $\mathbb{K}[x]$; and polynomial rings like $\mathbb{K}[x_1,...,x_s]$ or $\mathbb{Z}[x_1,...,x_s]$. Example of finite products of projectively trivial rings are modular integers rings and their rings of polinomials $(\mathbb{Z}/m\mathbb{Z})[x_{1},\ldots,x_{s}]$.

\subsection{Projectively trivial rings}

If $R$ is projectively trivial, then the number $\fe_R(R^n)$ of feedback  classes of isomorphisms of regular systems over $R^n$ (via the isomorphism $\varphi:\mathbf{P}(R)\cong\mathbb{N}$) equals the number, $\sol_{\mathbf{P}(R)}(R^n)$, of solutions of the linear equation in $\mathbb{N}$ 
\begin{equation}
n=z_1+2z_2+\cdots +nz_n
\end{equation}
This number is the number of partitions $p_{\mathbb{N}}(n)$ of integer $n$. Thus we have the result:

\begin{theorem}[cf. Corollary 8.1 \cite{miguel2013}]\label{teoprincipal}
Let $R$ be a projectively trivial ring. Then
\begin{equation}
\fe_R(R^n)=\sol_{\mathbf{P}(R)}(R^n)=\sol_{\mathbb{N}}(n)=p_{\mathbb{N}}(n)
\end{equation}
\end{theorem}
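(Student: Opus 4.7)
The statement is really a chain of three equalities, so the plan is to establish each link separately and observe that none of them is deep in its own right; most of the work has already been done in the preceding preparation.

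The first equality, $\fe_R(R^n)=\sol_{\mathbf{P}(R)}(R^n)$, is just the specialization to $X=R^n$ of the identity $\fe_R(X)=\sol_{\mathbf{P}(R)}(X)$ recorded (from \cite{miguel2013}) in the remark following Notation \ref{notation}. So this first link requires no argument beyond citing that remark.

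For the second equality, $\sol_{\mathbf{P}(R)}(R^n)=\sol_{\mathbb{N}}(n)$, I would invoke Proposition \ref{preli}(v): because $R$ is projectively trivial, the map $\varphi\colon(\mathbb{N},+)\to(\mathbf{P}(R),\oplus)$, $m\mapsto R^m$, is a monoid isomorphism. Applying $\varphi^{-1}$ componentwise to a solution $(Z_1,\dots,Z_n)$ of $R^n=Z_1\oplus Z_2^2\oplus\cdots\oplus Z_n^n$ produces a tuple $(z_1,\dots,z_n)\in\mathbb{N}^n$, and because $\varphi$ is a monoid morphism it carries the equation (\ref{Xeoplus}) with $X=R^n$ onto $n=z_1+2z_2+\cdots+nz_n$; the inverse assignment, applying $\varphi$ componentwise, is the two-sided inverse. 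Hence $\varphi$ induces a bijection between the two solution sets, giving the equality of cardinalities.

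The third equality, $\sol_{\mathbb{N}}(n)=p_{\mathbb{N}}(n)$, is the classical bijection between solutions of $n=\sum_{i=1}^n iz_i$ in $\mathbb{N}$ and partitions of $n$: given $(z_1,\dots,z_n)$, form the partition consisting of $z_i$ parts equal to $i$ for each $i$; conversely, given a partition $\lambda\vdash n$, let $z_i$ be the multiplicity of $i$ as a part of $\lambda$. The two assignments are mutually inverse.

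No step presents a genuine obstacle here; the only thing worth being careful about is the bookkeeping in the second step, where one must verify that the exponent $i$ in $Z_i^i$ (meaning an $i$-fold direct sum) corresponds to the coefficient $i$ of $z_i$ under $\varphi$. This is automatic from $\varphi$ being a morphism of additive monoids, since $\varphi(iz_i)=R^{iz_i}=(R^{z_i})^i=\varphi(z_i)^i$. Once the three equalities are in place, chaining them yields the theorem.
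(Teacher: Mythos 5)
Your proof is correct and follows essentially the same route as the paper, which likewise chains the identity $\fe_R(X)=\sol_{\mathbf{P}(R)}(X)$ from \cite{miguel2013}, the isomorphism $\varphi$ of Proposition \ref{preli}(v), and the classical correspondence between solutions of $n=z_1+2z_2+\cdots+nz_n$ and partitions of $n$. Your write-up is in fact more explicit than the paper's, which leaves most of these steps to the preceding discussion.
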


\subsection{Product rings}

Now suppose that $R=R_1\times\cdots\times R_t$ is a finite product of rings. Our goal in this section is to prove the formula
\begin{equation}
\sol_{\mathbf{P}(R)}(R^n)=\sol_{\mathbf{P}(R_1)}(R_1^n)\cdot \ldots \cdot \sol_{\mathbf{P}(R_t)}(R_t^n)
\end{equation}
and thus, from the systems theory point of view we will have the account
\begin{equation}
\fe_R(R^n)=\fe_{R_1}(R_1^n) \cdot \ldots \cdot\ fe_{R_t}(R_t^n)
\end{equation}

In order to prove above formulae we need to describe the structure of finitely generated projective $R$-modules when $R$ is a direct product of rings.

\begin{lemma}
\label{teo}
If $R\simeq R_{1}\times\cdots\times R_{t}$ is a finite product of rings.Then the following holds
\begin{center}
$\mathbf{P}(R) \cong \mathbf{P}(R_{1})\times...\times \mathbf{P}(R_{t})$.
\end{center}
\end{lemma}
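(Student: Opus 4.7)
The plan is to reduce to $t=2$ by induction and exhibit an explicit inverse pair of monoid morphisms between $\mathbf{P}(R_1\times R_2)$ and $\mathbf{P}(R_1)\times \mathbf{P}(R_2)$. The entire argument is driven by the orthogonal idempotent decomposition $1 = e_1 + e_2$ of $R = R_1 \times R_2$, where $e_1 = (1,0)$ and $e_2 = (0,1)$.

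First I would define the forward map $\Phi:\mathbf{P}(R_1)\times\mathbf{P}(R_2)\to \mathbf{P}(R)$ by $([P_1],[P_2])\mapsto [P_1\times P_2]$, where $P_1\times P_2$ is endowed with the componentwise $R$-action $(r_1,r_2)\cdot(p_1,p_2)=(r_1p_1,r_2p_2)$. I need to check that $P_1\times P_2$ is finitely generated projective over $R$: generation is immediate from generating sets of each $P_i$, and projectivity follows because $R_i$ is projective as an $R$-module (via the projection $R\twoheadrightarrow R_i$), so any $R_i$-projective module is $R$-projective, and the direct sum of $R$-projectives is $R$-projective. The map is well defined on isomorphism classes and is a monoid homomorphism because $(P_1\times P_2)\oplus(Q_1\times Q_2)\cong(P_1\oplus Q_1)\times(P_2\oplus Q_2)$ as $R$-modules.

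Next I would construct the candidate inverse $\Psi:\mathbf{P}(R)\to\mathbf{P}(R_1)\times\mathbf{P}(R_2)$ by $[P]\mapsto ([e_1 P],[e_2 P])$. The submodule $e_i P$ is naturally an $R_i$-module through the isomorphism $R_i\cong e_i R$. The key structural fact is that every $R$-module $P$ decomposes as $P = e_1 P \oplus e_2 P$ because $1=e_1+e_2$ and $e_1 e_2 = 0$, and this decomposition is compatible with the $R$-action, giving an isomorphism of $R$-modules $P\cong (e_1P)\times(e_2P)$. I would then verify that when $P$ is finitely generated projective over $R$, each $e_i P$ is finitely generated projective over $R_i$: if $P\oplus Q\cong R^n$ as $R$-modules, applying $e_i\cdot$ yields $e_iP\oplus e_iQ\cong R_i^n$ as $R_i$-modules, so $e_i P$ is a direct summand of a free $R_i$-module and hence finitely generated projective.

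The remaining steps are routine verifications that $\Phi$ and $\Psi$ are mutually inverse (both compositions reduce to the canonical isomorphism $P\cong e_1P\times e_2P$ and its analogue $e_i(P_1\times P_2)\cong P_i$), and that $\Psi$ is also a monoid morphism. I would then conclude the general case $t\geq 2$ by induction, writing $R\cong R_1\times(R_2\times\cdots\times R_t)$ and applying the base case together with the inductive hypothesis. The only mildly delicate point—and the step I would expect to need the most care—is the bookkeeping around the two different module structures (over $R$ versus over $R_i$) when checking that projectivity transfers in both directions; everything else is a direct calculation with the idempotent decomposition.
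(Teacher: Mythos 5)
Your proposal is correct, but it takes a genuinely different route from the paper. You work directly at the level of modules, using the orthogonal idempotents $e_1=(1,0)$, $e_2=(0,1)$ of $R_1\times R_2$ to build an explicit pair of mutually inverse monoid morphisms $[P]\mapsto([e_1P],[e_2P])$ and $([P_1],[P_2])\mapsto[P_1\times P_2]$, checking that finite generation and projectivity transfer in both directions, and then you handle general $t$ by induction. The paper instead invokes the $K$-theoretic description of $\mathbf{P}(R)$ as the set of conjugation orbits of the stabilized group $GL(R)$ acting on the set $Idem(R)$ of idempotent matrices (citing Rosenberg, Th.~1.2.3); since $GL(R_1\times\cdots\times R_t)\cong GL(R_1)\times\cdots\times GL(R_t)$ acts componentwise on $Idem(R_1)\times\cdots\times Idem(R_t)$, the orbit set factors as a product and the isomorphism for all $t$ at once falls out with no induction. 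Your argument is more elementary and self-contained --- it needs nothing beyond the definition of projective modules and the idempotent decomposition, and it makes the inverse maps completely explicit, which is useful if one later wants to track a specific module through the isomorphism. The paper's argument is shorter once the matrix-orbit description is granted, and it packages the verification that the bijection respects $\oplus$ into the block-sum structure already present in that description. The one place you rightly flag as needing care --- the bookkeeping between the $R$-module and $R_i$-module structures when transferring projectivity --- is handled correctly in your sketch: $e_iR^n\cong R_i^n$ gives $e_iP$ as a direct summand of a free $R_i$-module, and conversely $R_i$ is a direct summand of $R$ as an $R$-module, so $R_i$-projectives are $R$-projective. No gap remains.
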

\begin{proof}
The set $M(n,R)$ of $n\times n$ matrices over $R$ is embedded in  $M(n+1,R)$ by
\begin{center}
 $(a) \mapsto \left(\begin{array}{cc}
 a & 0 \\
 0& 0
 \end{array}
 \right)$
 \end{center}
and $\displaystyle{M(R)=\bigcup_{n\geq 1}M(n,R)}$. Note that every matrix in $M(R)$ has finite size. The set of idempotent matrices in $M(R)$ is denoted by $Idem(R)$.

On the other hand $GL(n,R)$ is embedded in $GL(n+1,R)$ by
\begin{center}
$a \mapsto \left(\begin{array}{cc}
 a & 0 \\
 0& 1
 \end{array}
 \right)$
 \end{center}
 and $\displaystyle{GL(R)=\bigcup_{n\geq 1}GL(n,R)}$. Every matrix in $GL(R)$ is invertible having finite size. 

$\mathbf{P}(R)$ may be identified  \cite[Th. 1.2.3]{Rosenberg} with the set of conjugation orbits of group $GL(R)$ on set $Idem(R)$. Since $GL(R)\cong GL(R_1)\times\cdots\times GL(R_t)$ acts on $Idem(R)=Idem(R_1)\times\cdots\times Idem(R_t)$ componentwise it follows the result.

\end{proof}

\begin{theorem}
Let $R\simeq R_{1}\times \ldots \times R_{t}$ be a finite product of rings. Then:
\begin{itemize}
\item[(i)] $\sol_{\mathbf{P}(R)}(R^n)=\sol_{\mathbf{P}(R_1)}(R_1^n)\cdot \ldots \cdot \sol_{\mathbf{P}(R_t)}(R_t^n)$
\item[(ii)] $fe_{R}(R^n)=fe_{R_{1}}(R_1^n) \cdot \ldots \cdot fe_{R_{t}}(R_t^n)$
\end{itemize}
\end{theorem}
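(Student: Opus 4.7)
The plan is to reduce the theorem to the monoid isomorphism established in Lemma \ref{teo} and then invoke the identity $\fe_R(X)=\sol_{\mathbf{P}(R)}(X)$ noted after Notation \ref{notation} to deduce (ii) from (i).

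First, I would transport equation~(\ref{Xeoplus}) through the isomorphism $\Phi:\mathbf{P}(R)\longrightarrow\mathbf{P}(R_1)\times\cdots\times\mathbf{P}(R_t)$. Writing $\Phi(Z_i)=(Z_i^{(1)},\ldots,Z_i^{(t)})$ and using that $\Phi$ is a morphism of monoids, the relation $R^n=Z_1\oplus Z_2^2\oplus\cdots\oplus Z_n^n$ becomes, componentwise,
\[
R_j^n \;=\; Z_1^{(j)}\oplus (Z_2^{(j)})^2\oplus\cdots\oplus (Z_n^{(j)})^n \qquad (j=1,\ldots,t),
\]
once one verifies that $\Phi(R^n)=(R_1^n,\ldots,R_t^n)$. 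This identification follows from the orthogonal idempotent decomposition $1=e_1+\cdots+e_t$ induced by $R\simeq R_1\times\cdots\times R_t$: as an $R$-module, $R^n$ splits as $e_1R^n\oplus\cdots\oplus e_tR^n$, and each summand $e_jR^n$ is canonically the free $R_j$-module $R_j^n$.

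Second, since $\Phi$ is a bijection, the choice of a solution of~(\ref{Xeoplus}) over $R$ amounts to the independent choice of $t$ solutions, one over each $R_j$. The solution sets are therefore in bijection with their Cartesian product, yielding
\[
\sol_{\mathbf{P}(R)}(R^n) \;=\; \prod_{j=1}^{t}\sol_{\mathbf{P}(R_j)}(R_j^n),
\]
which is (i). Statement (ii) is then immediate by applying the equality $\fe=\sol$ to $R$ and to each of the factors $R_j$, and multiplying the resulting scalar identities.

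I do not anticipate a serious obstacle here: once Lemma \ref{teo} is available, the argument is essentially bookkeeping through the isomorphism. The only point worth verifying with care is that $\Phi$ sends the distinguished free module $R^n$ to the tuple of componentwise free modules $(R_j^n)_j$ rather than to some twisted arrangement of projectives, and as noted this is immediate from the idempotent decomposition of~$R$.
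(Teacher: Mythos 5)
Your argument is correct and follows essentially the same route as the paper: both rely on Lemma \ref{teo} to reduce the equation to a componentwise problem over the factor rings and then invoke the identity $\fe_R(X)=\sol_{\mathbf{P}(R)}(X)$ for (ii). Your version is actually more careful than the paper's, since you explicitly verify that the isomorphism carries $R^n$ to the tuple $(R_1^n,\ldots,R_t^n)$ via the idempotent decomposition, a point the paper leaves implicit.
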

\begin{proof}
By Lemma \ref{teo},  $\mathbf{P}(R) \cong \mathbf{P}(R_{1})\times...\times \mathbf{P}(R_{t})$ we can solve the equation $R^n \simeq Z_1\oplus Z_2^2\oplus\cdots\oplus Z_n^n$ componentwise on every factor ring. Therefore the conditions hold.
\end{proof}

\begin{corollary}
\label{coro}
Let $R \simeq R_{1} \times \ldots \times R_{t}$ be a finite product of projectively trivial rings $R_{i}$ for $i=1, \ldots t$. Then $\fe_{R}(R^n)=\left(\sol_{\mathbb{N}}(n)\right)^{t}=\left(p_{\mathbb{N}}(n)\right)^t$ 
 \end{corollary}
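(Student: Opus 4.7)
The plan is to chain the two results that immediately precede the corollary, with no extra ingredients needed. Both the hard combinatorial content (the reduction of $\fe$ to $\sol_{\mathbb{N}}$ via the partition function) and the structural content (decomposition of $\mathbf{P}(R)$ across a finite product) are already established, so the corollary is a one-line composition of hypotheses.

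First I would apply part (ii) of the preceding theorem to the decomposition $R \simeq R_1 \times \cdots \times R_t$, which yields
\begin{equation*}
\fe_R(R^n) = \fe_{R_1}(R_1^n) \cdot \ldots \cdot \fe_{R_t}(R_t^n).
\end{equation*}
This step uses only that $R$ splits as a finite product; projective triviality of the factors is not needed here.

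Next I would invoke Theorem \ref{teoprincipal} separately on each factor. By hypothesis every $R_i$ is projectively trivial, so $\fe_{R_i}(R_i^n) = \sol_{\mathbb{N}}(n) = p_{\mathbb{N}}(n)$ for each $i = 1, \ldots, t$. Substituting this into the product above converts every factor into the same number $p_{\mathbb{N}}(n)$, and the conclusion $\fe_R(R^n) = (p_{\mathbb{N}}(n))^t$ follows.

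There is no genuine obstacle here: the corollary is essentially a bookkeeping consequence of Lemma \ref{teo} (which identifies $\mathbf{P}(R)$ with the componentwise product of the $\mathbf{P}(R_i)$), Theorem \ref{teoprincipal} (which handles projectively trivial rings via the isomorphism $\varphi$), and the preceding theorem (which assembles them). The only thing worth remarking is that the identification $\sol_{\mathbb{N}}(n) = p_{\mathbb{N}}(n)$ was already stated just before Theorem \ref{teoprincipal}, so one does not need to re-derive it.
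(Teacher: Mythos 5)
Your proof is correct and follows essentially the same route as the paper: both reduce to the factor rings via the product decomposition of $\mathbf{P}(R)$ and then count $p_{\mathbb{N}}(n)$ partitions per factor. The only difference is presentational — you compose the two preceding results directly, whereas the paper re-derives the componentwise system of equations $n = a_1 + 2a_2 + \cdots + sa_s$ in $\mathbb{N}^t$ from Lemma \ref{teo} before counting; your version is the more economical of the two.
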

\begin{proof}
Since the rings $R_{i}$ are  projectively trivial rings, then all finitely generated projective modules over each $R_{i}$ are free, that is, $\mathbf{P}(R_{i})\simeq \mathbb{N}$ for $i=1,...,t$.

By Theorem \ref{teo} we deduce that 
\begin{equation}
\label{bu}
\mathbf{P}(R)\simeq \mathbf{P}(R_{1})\times ... \times \mathbf{P}(R_{t})\simeq \mathbb{N} \times\overset{t)}{...} \times \mathbb{N}.
\end{equation}

So, the number of classes of feedback isomorphisms of locally Brunovsky linear systems over $R$ is the number of solutions of the equation
\begin{center}
$m = x_{1}+2x_{2}+3x_{3}+...+sx_{s}$ where $m$ and $x_{i}$ are t-uples of natural numbers.
\end{center}

Now, $m=(m_{1}, \ldots, m_{t}) \in \mathbf{P}(X)$ and $X\simeq R^{n}$  then $m=(n, \overset{t)}{\ldots},n)$. So, the equation to solve is
\begin{center}
$(n,n,\overset{t)}{...},n)=(a_{1},b_{1},...,t_{1})+2(a_{2},b_{2},...,t_{2})+3(a_{3},b_{3},...,t_{3})+...+s(a_{s},b_{s},...,t_{s})$,
\end{center}

Then, we look for the number of solutions of the system
\begin{equation}
\left\{\begin{array}{l}
n= a_{1}+2a_{2}+3a_{3}+...+sa_{s} \\
n=b_{1}+2b_{2}+3b_{3}+...+sb_{s} \\
\vdots \\
n=t_{1}+2t_{2}+3t_{3}+...+st_{s}\\
\end{array}\right.
\end{equation}
where $n, a_{i}, b_{i},...,t_{i} \in \mathbb{N}$ for $i=1,...,t$.

Now, since $\mathbb{N}$ is cancellative, the number of solutions of i-th equation is equal to  $p(n)$, the partitions of $n$ for each $i=1,...t$. Thus, the number of solutions of the system is equal to 
\begin{center}
$fe_{R}(R^{n})=p_{\mathbb{N}}(n)\cdot p_{\mathbb{N}}(n) \cdot \overset{t)}{\ldots} \cdot p_{\mathbb{N}}(n)= (p_{\mathbb{N}}(n))^{t}$
\end{center}
\end{proof}
\begin{corollary}
For the ring $R= \mathbb{Z}_{l} \simeq \mathbb{Z}_{p_{1}^{r_{1}}} \times \ldots \times \mathbb{Z}_{p_{t}^{r{t}}}$ we have
\begin{center}
$fe_{\mathbb{Z}_{l}}(R^{n})= (p_{\mathbb{N}}(n))^{t}$
\end{center}
\end{corollary}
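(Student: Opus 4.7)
The plan is to apply Corollary \ref{coro} directly; the two hypotheses of that corollary, namely that $R$ decomposes as a finite product and that each factor in the decomposition is projectively trivial, both specialize cleanly to the case $R = \mathbb{Z}_{l}$.

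First, I would invoke the Chinese Remainder Theorem: if $l = p_{1}^{r_{1}} \cdots p_{t}^{r_{t}}$ is the factorization of $l$ into distinct prime powers, then as rings one has the isomorphism $\mathbb{Z}_{l} \cong \mathbb{Z}_{p_{1}^{r_{1}}} \times \cdots \times \mathbb{Z}_{p_{t}^{r_{t}}}$ already stated in the corollary. This supplies the product decomposition with exactly $t$ factors.

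Next I need to check that every factor $\mathbb{Z}_{p_{i}^{r_{i}}}$ is projectively trivial. Each such ring is local, its unique maximal ideal being generated by $p_{i}$, and local rings are projectively trivial because every finitely generated projective module over a local ring is free. In fact this is one of the examples explicitly listed in the paragraph introducing Section 3 (the ring $\mathbb{Z}/p^{r}\mathbb{Z}$ is named there), so this verification requires no additional work.

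With both hypotheses in place, Corollary \ref{coro} applied to $R \simeq \mathbb{Z}_{p_{1}^{r_{1}}} \times \cdots \times \mathbb{Z}_{p_{t}^{r_{t}}}$ yields at once $\fe_{\mathbb{Z}_{l}}(R^{n}) = (p_{\mathbb{N}}(n))^{t}$. There is no genuine obstacle: the statement is a specialization of the preceding corollary to the most familiar family of product rings, since all of the combinatorial content (the reduction to solving $t$ simultaneous copies of the diophantine partition equation in $\mathbb{N}$) has already been handled in the proof of Corollary \ref{coro}.
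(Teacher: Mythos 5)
Your proposal is correct and follows essentially the same route as the paper: the paper's own proof simply notes that $\mathbf{P}(\mathbb{Z}/p_i^{r_i}\mathbb{Z})\simeq\mathbb{N}$ for each $i$ (i.e.\ each factor is projectively trivial, being local) and concludes by Corollary \ref{coro}. Your version just spells out the Chinese Remainder Theorem decomposition and the locality argument a bit more explicitly.
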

\begin{proof}
Since $\mathbf{P}(\mathbb{Z}/{p_{i}^{r_{i}}}\mathbb{Z})\simeq \mathbb{N}$ for each $i$, we conclude the proof. 
\end{proof}

\section{The equation in $\mathbb{N}\times G$, G abelian group. Dedekind domains.}

Let $R$ be a commutative ring.  Let $\mathrm{Pic}(R)$ be the set of isomorphism classes of line bundles over $R$ (finitely generated projective $R$-modules of $rank$ one). Then  \cite[I\S3]{Kbook} $(\mathrm{Pic}(R),\otimes_R)$ is an abelian group where $R=1_{\mathrm{Pic}(R)}$ and $P^{-1}=\mathrm{Hom_R(P,R)}$. 

In the sequel $R$ denotes a Dedekind domain with field of fractions $\mathbb{K}(R)$; that's to say, a commutative domain (no nonzero zero-divisors) which is noetherian, integrally closed and $1$-dimensional. Then \cite[I.3.4]{Kbook}  finitely generated projective $R$-module $P$ is completely classifed by its rank $\rk(P)=\dim (P \otimes \mathbb{K}(R)$) and its determinant $\displaystyle{\wedge^{\mathrm{rk}(P)}P}$. To be precise, $P$ is isomorphic to $R^{\mathrm{rk} (P)-1}\oplus \displaystyle{\wedge^{\mathrm{rk}(P)} P}$.

Thus, $\mathbf{P}(R)$ equals $[\mathbb{N}^{+} \times Pic(R)]\cup \{0\}$ as a set. Arithmetic in $(\mathbf{P}(R), \oplus)$ is given by

\begin{equation}
P\oplus Q \cong R^{\rk(P)+\rk(Q)-1}\oplus \left( \displaystyle{\wedge^{\mathrm{rk}(P)} P}\otimes_{R} \displaystyle{\wedge^{\mathrm{rk}(Q)} Q}\right)
\end{equation}
where zero module $0$ is the identity and internal law $\oplus$ is commutative by Proposition \ref{preli}.
\begin{notation}
The determinant line bundle $\wedge^{\mathrm{rk}(P)}$ of a finitely generated $R$- module $P$ is denoted by $\Det(P)$.
\end{notation}

\begin{remark}
\label{remark}
Let $X$ be a finitely generated projective $R$-module. Looking for the number of classes of feedback isomorphisms of regular systems over  $X$ is equivalent to computing the number of solutions of equation (\ref{Xeoplus})
\begin{equation}
X=Z_1\oplus Z_2^2\oplus\cdots\oplus Z_n^n
\end{equation}
in $\mathbf{P}(R)\cong [\mathbb{N}^{+}\times Pic(R)]\cup \{0\}$. These solutions are determined by solutions of
\begin{equation}
\label{rank}
\rk(X)=\rk(Z_1)+2\rk(Z_2)+\cdots+n\rk(Z_n)
\end{equation}
in $(\mathbb{N}, +)$ together with a solution of
\begin{equation}
\label{det}
\Det(X)=\Det(Z_1)\otimes\Det(Z_2)^{\otimes 2}\otimes\cdots\otimes\Det(Z_n)^{\otimes n}
\end{equation}
in $Pic(R)$.

Note that if we only fix the rank of the state space, $rk(X)=n$, then $X\cong R^{n-1}\oplus L$ and 
\begin{equation*}
\Det(X)= \Det(R^{n-1}\oplus L)= \displaystyle{\wedge^{n} (R^{n-1}\oplus L)}=\displaystyle{\bigoplus^{n}_{i=0}[(\wedge^{i} R^{n-1})\otimes (\wedge^{n-i} L)]}=
\end{equation*}
\begin{equation*}
=\displaystyle{\bigoplus^{n-2}_{i=0} [R^{\binom{n-1}{i}}\otimes 0] \oplus[(\wedge^{n-1} R^{n-1})\otimes (\wedge^{1} L)] \oplus [\wedge^{n} R^{n-1} \otimes \wedge^{0} L]}= R\otimes L = L 
\end{equation*}
and equation (\ref{det}) turns to be
\begin{equation}
L=\Det(Z_1)\otimes\Det(Z_2)^{\otimes 2}\otimes\cdots\otimes\Det(Z_n)^{\otimes n}
\end{equation}
\end{remark}
\begin{remark}
\label{detcero}
Let $P$ a finitely generated projective module over a Dedekind domain $R$. In particular note that if $\rk(P)=0$, then $\Det(P)=R$.

Then, note that the solutions of equation (\ref{det}) in $Pic(R)$ are entangled with solutions of ranks equation in $\mathbb{N}$.
\end{remark}
\begin{remark}
\label{alfa}
A classical result by Claborn \cite{Claborn} shows that given any abelian group $G$, there exists a Dedekind domain $R$ such that $Pic(R) \cong G$.  

Let $\mid Pic(R) \mid=p$ be the order of $Pic(R)$ with $p$ prime. We will use the isomorphism

\begin{center}
$(Pic(R)$, $\otimes) \overset{\overset{\alpha}{\cong}}{\longrightarrow}$ $(\mathbb{Z}/p\mathbb{Z}, +)$
\end{center}
\begin{center}
$L$ \hspace{0.5cm} $ \mapsto$ \hspace{0.3cm} $ \alpha(L)$
\end{center}
where in particular $\alpha(R)=\overline{0}$ in  $\mathbb{Z}/p\mathbb{Z}$
\end{remark}

Finally, we introduce some notation about the account of regular systems over a Dedekind domain, $R$.
\begin{notation}
\begin{enumerate}
\item $fe_{R}(X)$ denotes the number of feedback classes of regular systems over $X$ (\ref{notation}). 
\item $fe_{R}(n)$ will be the number of classes of feedback isomorphisms of regular systems over $R$ with a state space a finitely projective $R$-module of rank $n$.
\item In the case of Dedekind domains $X\cong R^{n-1}\oplus L$ and therefore $fe_{R}(X)= fe_{R}(R^{n-1}\oplus L )$. 
\end{enumerate}
\end{notation}

In order to solve above equations (\ref{rank}) and (\ref{det}) we introduce the following combinatorial number 
\begin{definition}
Let $n$ be a positive integer and $1\leq k \leq n$. We denote by $\nu(n,k)$ the set of partitions of integer $n$ into $k$ different summands. We also denote by $\nu(n,k)$ its cardinal.
\end{definition}

As matter of example $\nu(6,2)$ is the number of partitions of integer $6$ into $2$ different summands and hence contains exactly partitions $$(51),(42),(411),(3111),(2211),(21111)$$ and therefore $\nu(6,2)=6$.

\begin{definition}
\label{nukp}
Let $p$ be a prime number. We denote by $\nu(n,k,p)$ the set of partitions in $\nu(n,k)$ where all coefficientes of the summands are multiples of $p$. We also denote by $\nu(n,k,p)$ its cardinal.

For convenience let's denote by $\nu'(n,k,p)= \nu(n,k)-\nu(n,k,p)$.
\end{definition}
 As matter of example, $\nu(6,2,2)=1$ because the only partition in $\nu(6,2)$ with the property that all summands are multiple of $2$ is $(42)$ .

\begin{remark}
Combinatorial number $\nu(n,k)$ needs further study. We only point out two straightforward properties:
\begin{itemize}
\item[(i)] $\nu(n,1)=\text{div}(n)$; that is $\nu(n,1)$ equals the number of divisors (including both $1$ and $n$) of integer $n$
\item[(ii)] If $n<k(k+1)/2$ then $\nu(n,k)=0$ because the least partition one can form with $k$ different summands is $(k,k-1,...,2,1)$ and therefore an $n\geq 1+2+\cdots +k=\frac{k(k+1)}{2}$ is needed.
\end{itemize}
\end{remark}

We state our main result:

\begin{theorem}
Let $R$ be a Dedekind domain and let $Pic(R)$ be its Picard Group. Then, the number of feedback classes of regular systems is as follows:
\begin{itemize}
\item[(i)] $fe_{R}(n)$ is the number of solutions $(Z_{1},Z_{2},\ldots,Z_{n})$ of equation
\begin{equation}
\label{rankn}
n=rk(Z_{1})+2rk(Z_{2})+\ldots+nrk(Z_{n}) \textit{ in } (\mathbb{N}, +)
\end{equation}
\item[(ii)] If $\mid Pic(R) \mid=\infty$ then $\fe_R(n)=\infty$.
\item[(iii)] If $\mid Pic(R) \mid=d<\infty$ then $fe_R(n)=\displaystyle{\sum_{k=1}^{n}\nu(n,k)\cdot d^k}$
\item[(iv)] $fe_{R}(X)$ is the number of solutions $(Z_{1},Z_{2},\ldots,Z_{n})$ of the system of equations (see Remarks \ref{remark} and \ref{detcero}).
\begin{center}
$\left\{\begin{array}{l}
\rk(X)=\rk(Z_1)+2\rk(Z_2)+\cdots+n\rk(Z_n)$ in $(\mathbb{N}, +) \\
\\
\Det(X)=L=\Det(Z_1)\otimes\Det(Z_2)^{\otimes 2}\otimes\cdots\otimes\Det(Z_n)^{\otimes n}$ in $(Pic(R),\otimes).\\
\end{array}\right.$
\end{center}

\item[(v)] If $\mid Pic(R) \mid=p$ is prime then $fe_R(X\simeq R^{n})=\displaystyle{\sum_{k=1}^{n}\left[\nu(n,k,p)\cdot p^{k}+\nu'(n,k,p)\cdot p^{k-1}\right]}$.
\item[(vi)])  If $\mid Pic(R) \mid=p$ is prime then $fe_{R}(R^{n-1}\oplus L)=\displaystyle{\sum_{k=1}^{n}\nu'(n,k,p)\cdot p^{k-1}}$

\end{itemize}
\end{theorem}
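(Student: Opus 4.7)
The plan is to exploit the identification $\mathbf{P}(R)\cong[\mathbb{N}^{+}\times\Pic(R)]\cup\{0\}$ recorded before Remark~\ref{remark}, which splits equation (\ref{Xeoplus}) into an independent rank equation in $(\mathbb{N},+)$ and a determinant equation in $(\Pic(R),\otimes)$. With this decomposition, (i) and (iv) reduce to direct restatements of Remark~\ref{remark}. For (ii) I will pick the rank solution $\rk(Z_n)=1$, $\rk(Z_i)=0$ for $i<n$; since $\Det(Z_n)$ can be freely chosen in the infinite group $\Pic(R)$, this already yields infinitely many inequivalent tuples $(Z_1,\ldots,Z_n)$, all with state space of rank $n$.

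For (iii) I will use the classical bijection between solutions of $n=\rk(Z_1)+2\rk(Z_2)+\cdots+n\rk(Z_n)$ and integer partitions of $n$ (the part $i$ appearing $\rk(Z_i)$ times). A partition with exactly $k$ distinct part sizes corresponds to a solution in which precisely $k$ of the ranks $\rk(Z_i)$ are strictly positive; each such nonzero $Z_i$ is then classified by $\Det(Z_i)\in\Pic(R)$ freely, giving $d^k$ tuples, while the zero $Z_i$ are forced. Grouping partitions by their number of distinct parts produces $\fe_R(n)=\sum_{k=1}^{n}\nu(n,k)\,d^k$.

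For (v) and (vi) I will apply the group isomorphism $\alpha:\Pic(R)\cong\mathbb{Z}/p\mathbb{Z}$ of Remark~\ref{alfa}. By Remark~\ref{detcero}, indices $i$ with $\rk(Z_i)=0$ contribute $\alpha(\Det(Z_i))=\overline 0$, so the determinant equation collapses to the single $\mathbb{F}_p$-linear congruence
\begin{equation*}
i_1 a_{i_1}+i_2 a_{i_2}+\cdots+i_k a_{i_k}\equiv \alpha(L)\pmod p,
\end{equation*}
where $i_1<\cdots<i_k$ are the indices of strictly positive rank and $a_{i_j}=\alpha(\Det(Z_{i_j}))\in\mathbb{F}_p$ are the only free variables. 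A routine count will then show: if every $i_j$ is divisible by $p$ (so the partition lies in $\nu(n,k,p)$) the left side vanishes identically, giving $p^k$ solutions when $\alpha(L)=\overline 0$ and none otherwise; if some $i_j$ is coprime to $p$ (so the partition is counted by $\nu'(n,k,p)$) the coefficient vector is nonzero in $\mathbb{F}_p$ and the equation has exactly $p^{k-1}$ solutions regardless of $\alpha(L)$. Part (v) then follows because $X\cong R^{n}$ forces $\alpha(L)=\alpha(R)=\overline 0$, so both the $\nu(n,k,p)$ and $\nu'(n,k,p)$ contributions appear; part (vi), for nontrivial $L$, follows because then $\alpha(L)\neq\overline 0$ makes the $\nu(n,k,p)$ partitions contribute zero, leaving only $\sum_{k=1}^{n}\nu'(n,k,p)\,p^{k-1}$.

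The step I expect to need the most care is the bookkeeping coming from Remark~\ref{detcero}: the zero-rank components $Z_i$ are forced to be the trivial line bundle and must not be treated as free parameters in the linear congruence, so only the $k$ indices of positive rank supply variables $a_{i_j}$. Once this is properly pinned down, the remaining combinatorics reduces to the elementary count of solutions of a single linear equation over $\mathbb{F}_p$, which splits cleanly into the ``all coefficients zero mod $p$'' and ``some coefficient invertible mod $p$'' cases used above.
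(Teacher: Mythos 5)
Your proposal is correct and follows essentially the same route as the paper: the splitting of $\mathbf{P}(R)$ over a Dedekind domain into rank and determinant data, the identification of rank solutions with exactly $k$ nonzero entries with the partitions counted by $\nu(n,k)$ (giving $d^k$ free choices of determinants), and the count of solutions of a single linear congruence in $\mathbb{Z}/p\mathbb{Z}$ for parts (v)--(vi). Your phrasing of the mod-$p$ count is in fact slightly cleaner than the paper's (whose prose momentarily swaps the roles of $\nu(n,k,p)$ and $\nu'(n,k,p)$ before arriving at the same final formulae), but the underlying argument is identical.
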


\begin{proof}
\begin{enumerate}
\item[(i)] Is clear by Remark \ref{remark}.
\item[(ii)] Suppose that $Pic(R)$ is of infinite order and $L$ varies in $Pic(R)$. Then 
\begin{center}
$(Z_1=R^{n-1}\oplus L,Z_2=0,...)$ 
\end{center}
are infinitely many different solutions of equation (\ref{rankn}).

\item[(iii)] $\nu(n,k)$ is the set of solutions $(\rk(Z_{1}),\rk(Z_{2}),\ldots,\rk(Z_{n}))$ of the equation (\ref{rankn}) where $k$ of the entries of above tuple are non zero. Thus 
\begin{center}
$(\rk(Z_{1}),\rk(Z_{2}),\ldots,\rk(Z_{n}))=(0,\ldots,\rk(Z_{i_{1}}),\ldots,\rk( Z_{i_{k}}),\ldots,0,\ldots)$
\end{center}
In order to realize solutions $(Z_{1}, \ldots, Z_{n})$ we are free to choose $L_{1},\ldots, L_{k}$ in $Pic(R)$ to obatin solutions
\begin{center}
$(0,\ldots,0,R^{rk(Z_{i_{1}})-1}\oplus L_{1},0,\ldots,0, R^{rk(Z_{i_{k}})-1}\oplus L_{k},0,\ldots)$
\end{center}
Since $L_{i}$ varies in $Pic(R)$, then there are exactly $d^{k}$ different choices and therefore
\begin{center}
$fe_R(n)=\displaystyle{\sum_{k=1}^{n}\nu(n,k)\cdot d^k}$
\end{center}
\item[(iv)] Is clear from the Remark \ref{remark}.

\item[(v)] If $\mid Pic(R) \mid=p$ is prime, then by Remark \ref{alfa}, $\Pic(R) \cong \mathbb{Z}/p\mathbb{Z}$, and equations giving $fe_{R}(R^{n})$ are:
\begin{center}
$\left\{\begin{array}{l}
n=\rk(Z_1)+2\rk(Z_2)+\cdots+n\rk(Z_n)$ in $(\mathbb{N}, +) \\
\\
0=a_{1} + 2a_{2} + \cdots + na_{n}$ in $(\mathbb{Z}/p\mathbb{Z},+).\\
\end{array}\right.$
\end{center}
where $a_{i}=\alpha(\Det(Z_{i}))$

There are exactly $\nu(n,k)$ different solutions for the ranks equation with exactly $k$ non zero $\rk(Z_{i})'s$. Every solution of ranks equation gives some choices for the second equation. But is crucial to know how many coefficients are non zero modulo $p$.

The equation over determinants by $\alpha$ is on the form
\begin{equation}
0= a_{1}+2a_{2}+\ldots+pa_{p}+...+(2p)a_{2p}+\ldots+ na_{n} \textit{ in } \mathbb{Z}/p\mathbb{Z}
\end{equation}
Let us reorder the summands such that we have $l= \lfloor \frac{n}{p}\rfloor$ summands which coefficientes are multiple of $p$, and $n-l$ summands whose coefficientes are prime with $p$.
\begin{equation*}
\label{ecup}
0= \overbrace{pa_{p}+2pa_{2p}+\ldots+lpa_{lp}}^{l \text{ summands}}+
\overbrace{a_{1}+\ldots+(p-1)a_{p-1}+(p+1)a_{p+1}+...}^{(n-l) \text{ summands }}
\end{equation*}

Since the group of $l$ summands vanishes module $p$, then above equation in $\mathbb{Z}/p\mathbb{Z}$ is in fact
\begin{center}
$0= a_{1}+\ldots+(p-1)a_{p-1}+(p+1)a_{p+1}+...$
\end{center}
or even
\begin{center}
$0= 0$ if all non zero $(z_{i})'s $ are on the form $i=\lambda p$.
\end{center}

In the former case, corresponding to $\nu(n,k,p)$ in Definition \ref{nukp}, we have exactly  $p^{(k-l-1)}$ choices of $a_{1},...,a_{p-1},a_{p+1},...$ and $p^{l}$ choices for $a_{p},a_{2p},...,a_{lp}$. So, there are $p^{k-l-1} \cdot p^{l}=p^{k-1} $ different choices for every solution in $\nu(n,k)$. 

In the latter case, corresponding to $\nu'(n,k,p)$ in Definition \ref{nukp}, $p^{k}$ different solutions are freely chosen for $a_{p},\ldots, a_{kp}$.

Therefore
\begin{center}
$fe_{R}(R^{n})=\displaystyle{\sum_{k=1}^{n}\left(\nu(n,k,p)\cdot p^{k}+\nu'(n,k,p)\cdot p^{k-1}\right)}$
\end{center}

\item[(vi)] If $rk(X)=n$ but $X$ is not free, then $X\cong R^{n-1}\oplus L$ and $\alpha(L)\neq 0$ in $\mathbb{Z}/p\mathbb{Z}$. The equations to compute $fe_{R}(R^{n-1}\oplus L)$ are
\begin{center}
$\left\{\begin{array}{l}
n=\rk(Z_1)+2\rk(Z_2)+\cdots+n\rk(Z_n)$ in $(\mathbb{N}, +) \\
\\
0\neq \alpha(L) =a_{1} + 2a_{2} + \cdots + na_{n}$ in $(\mathbb{Z}/p\mathbb{Z},+) \textit{ where } a_{i}=\alpha(\Det(z_{i}))\\
\end{array}\right.$
\end{center}

Analogous reasoning of $(v)$ gives us to 
\begin{center}
$0\neq \alpha(L) =a_{1} + 2a_{2} + \cdots + (p-1)a_{p-1}+(p+1)a_{p+1}+\ldots$
\end{center}
or
\begin{center}
$0\neq \alpha(K)=0$ having no solution.
\end{center}

Therefore if $X$ is not free of rank $n$ we have
\begin{center}
$fe_{R}(R^{n-1}\oplus L)=\displaystyle{\sum_{k=1}^{n}\nu'(n,k,p)\cdot p^{k-1}}$
\end{center}

\end{enumerate}
\end{proof}

\begin{remark}
Note that if we perform the sum of all computations over elements of $Pic(R)$, then we obatin coherent relationship between our formulae.
\begin{equation*}
\displaystyle{\sum_{L \in Pic(R)} fe_{R}(R^{n-1}\oplus L)}= \displaystyle{\sum_{k=1}^{n}\left(\nu(n,k,p)\cdot p^{k}+\nu'(n,k,p)\cdot p^{k-1}\right)}+ (p-1)\cdot \displaystyle{\sum_{k=1}^{n} \nu'(n,k,p)\cdot p^{k-1}} = 
\end{equation*}
\begin{equation*}
=\displaystyle{\sum_{k=1}^{n} \left(\nu(n,k,p)\cdot p^{k} + p\cdot \nu'(n,k,p)\cdot p^{k-1} \right)}=\displaystyle{\sum_{k=1}^{n} \left(\nu(n,k,p)\cdot p^{k} + \nu'(n,k,p)\cdot p^{k} \right)}=
\end{equation*}
\begin{equation*}
=\displaystyle{\sum_{k=1}^{n} \left(\nu(n,k,p)+ \nu'(n,k,p)\right) \cdot p^{k}}= \displaystyle{\sum_{k=1}^{n} \nu(n,k)\cdot p^{k}}=fe_{R}(n).
\end{equation*}
\end{remark}

\section{Conclusions}
This paper gives a combinatorial approach to a well known problem in systems theory. New (as far as we now) combinatorial numbers $\nu(n,k)$ is introduced. Further study of these combinatorial numbers would be interesting.

A motivation for the study of partitions in monoids is introduced. In particular feedback equivalence problems over product rings translate to partitions over product monoids and feedback equivalence problems over Dedekind domains $R$ translate to partitions and linear equations in $Pic(R)$.

\section*{References}


\end{document}